\newtheorem{theorem}{Theorem}[section]
\newtheorem{lemma}[theorem]{Lemma}
\newtheorem{proposition}[theorem]{Proposition}
\newtheorem{corollary}[theorem]{Corollary}
\newtheorem{definition}[theorem]{Definition}
\newtheorem{remark}[theorem]{Remark}
\newenvironment{proof}{\noindent {\it Proof}.}{\hfill$\Box$}
\newcommand{\lignelarge}{\lower1.5ex\hbox{\rule{0ex}{4ex}}}
\newcommand{\Int}[1]{{\rm Int}\left(#1\right)}
\newcommand{\IN}{{\mathbb N}}
\newcommand{\CA}{{\mathcal A}}
\newcommand{\CC}{{\mathcal C}}
\newcommand{\CF}{{\mathcal F}}
\newcommand{\CG}{{\mathcal G}}
\newcommand{\milieu}{{\rm mid}}
\let\Lbrack\llbracket
\let\Rbrack\rrbracket
\begin{document}

\title{Interval maps of given topological entropy and Sharkovskii's type}
\author{Sylvie Ruette}
\date{June 9, 2019}
\maketitle

\begin{abstract}
It is known that the topological entropy of a continuous interval map
$f$ is positive if and only if the type of $f$  for Sharkovskii's order
is $2^d p$ for some odd integer $p\ge 3$ and some $d\ge 0$; and
in this case the topological entropy of $f$ is greater than or equal to
$\frac{\log\lambda_p}{2^d}$, where $\lambda_p$ is the unique positive
root of $X^p-2X^{p-2}-1$. 
For every odd $p\ge 3$, every $d\ge 0$
and every $\lambda\ge\lambda_p$, we build a piecewise monotone
continuous interval map that is of type $2^dp$ for Sharkovskii's order
and whose topological entropy is $\frac{\log\lambda}{2^d}$. This shows that,
for a given type, every possible finite entropy above the minimum
can be reached provided the type allows the map to have positive entropy.
Moreover, if $d=0$ the map we build is topologically mixing.
\end{abstract}

\section{Introduction}

In this paper, an interval map is a continuous map $f\colon I\to I$ where
$I$ is a compact nondegenerate interval. A point $x\in I$ is periodic
of period $n$
if $f^n(x)=x$ and $n$ is the least positive integer with this property, i.e.
$f^k(x)\ne x$ for all $k\in\Lbrack 1,n-1\Rbrack$.

Let us recall Sharkovskii's theorem and the definitions of Sharkovskii's order
and type \cite{Sha} (see e.g. \cite[Section 3.3]{R3}).

\begin{definition}\label{defi:sharkovskii}
\emph{Sharkovskii's order} is the total ordering on $\IN$ defined by:
$$
3\lhd 5\lhd 7\lhd 9\lhd\cdots\lhd 2\cdot 3 \lhd 2\cdot 5\lhd\cdots\lhd
2^2\cdot 3\lhd 2^2\cdot 5\lhd\cdots\lhd 2^3\lhd 2^2 \lhd 2 \lhd 1
$$
(first, all odd integers $n>1$, then $2$ times the odd integers $n>1$,
then successively $2^2\times$, $2^3\times$, \ldots, $2^k\times$ $\ldots$ the odd integers $n>1$, 
and finally
all the powers of $2$ by decreasing order).

$a \rhd b$ means $b\lhd a$. The notation $\unlhd, \unrhd$ will denote 
the order with possible equality.
\end{definition}

\begin{theorem}[Sharkovskii]\label{theo:Sharkovsky}
If an interval map $f$ has a periodic point of period
$n$ then, for all integers  $m\rhd n$, $f$ has periodic points of period $m$.
\end{theorem}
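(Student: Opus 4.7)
The plan is to use the classical \emph{Markov graph} (or \emph{covering graph}) technique associated to a given periodic orbit. If $O=\{x_1<x_2<\cdots<x_n\}$ is a periodic orbit of $f$ of period $n$, I form the directed graph $G$ whose vertices are the intervals $J_i=[x_i,x_{i+1}]$, $1\le i\le n-1$, with an arrow $J_i\to J_j$ whenever $f(J_i)\supseteq J_j$. Iterated use of the intermediate value theorem shows that every closed walk $J_{i_0}\to J_{i_1}\to\cdots\to J_{i_k}=J_{i_0}$ of length $k$ in $G$ produces a point $x\in J_{i_0}$ with $f^r(x)\in J_{i_r}$ for all $r$ and $f^k(x)=x$, so $x$ is periodic of some period dividing $k$. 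The task therefore reduces to exhibiting, for every target $m$ with $n\lhd m$, a closed walk of length $m$ in $G$ that is \emph{primitive} (not a repetition of a shorter walk) and whose associated point does not lie on $O$, so that the resulting period is exactly $m$.

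The heart of the argument is the case $n$ odd with $n\ge 3$. After replacing $O$ by an orbit whose period is minimal in Sharkovskii's order among all odd periods $\ge 3$ of $f$, a combinatorial reduction shows that one can assume $O$ has the \emph{Stefan spiral} pattern: labeling the orbit by its position on the line and setting $k=(n+1)/2$, the action of $f$ sends $x_k\mapsto x_{k+1}\mapsto x_{k-1}\mapsto x_{k+2}\mapsto\cdots\mapsto x_n\mapsto x_1$ (or the mirror-symmetric version). A direct inspection of the Markov graph of this Stefan cycle shows that $G$ contains a self-loop $J_k\to J_k$, a long cycle of length $n-1$ visiting all intervals, and shortcut arrows from the ``outer'' interval back to every other interval. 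Concatenating these ingredients produces primitive loops of every length $m$ satisfying $n\lhd m$ in Sharkovskii's order, i.e.\ all odd $m>n$, all even $m$, and all powers of two.

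The remaining case $n=2^d p$ with $p$ odd and $d\ge 1$ is handled by induction on $d$ via a squaring argument. One identifies an $f^2$-invariant subinterval on which $f^2$ has a periodic orbit of period $2^{d-1}p$ and applies the induction hypothesis to $f^2$; each period $q$ with $2^{d-1}p\lhd q$ produced for $f^2$ then lifts to $f$ as either $q$ or $2q$, and a parity analysis verifies that the collection of periods obtained is exactly $\{m:n\lhd m\}$. The pure powers of two case ($p=1$) follows the same pattern starting from any fixed point of an appropriate iterate.

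The main obstacle is the odd-period case: first, showing that minimality in Sharkovskii's order forces the Stefan spiral combinatorial pattern, which requires a careful analysis of the cyclic permutation induced by $f$ on the orbit together with an argument ruling out any other ``zigzag'' pattern; and second, the bookkeeping needed to check that each loop constructed in $G$ is primitive and that its associated periodic point avoids $O$, so that the resulting period is genuinely $m$ rather than a proper divisor.
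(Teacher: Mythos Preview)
The paper does not prove Sharkovskii's theorem at all: it is stated as background with a reference to \cite{Sha} and to \cite[Section~3.3]{R3}, and is then used without further comment. So there is no ``paper's own proof'' to compare against.

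Your sketch follows the classical Stefan/Block--Guckenheimer--Misiurewicz--Young route (Markov graph of a periodic orbit, itinerary lemma, reduction of the minimal odd period to the Stefan spiral, then a doubling induction for the factors of~$2$), and this is indeed how the result is proved in the references the paper cites. The outline is sound and you have correctly identified the two genuinely nontrivial points: that a minimal odd period forces the Stefan pattern, and the bookkeeping ensuring that a primitive loop in the graph yields a point of the exact period rather than a divisor.

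One place where your sketch is looser than it should be is the inductive step for $n=2^dp$. The sentence ``one identifies an $f^2$-invariant subinterval on which $f^2$ has a periodic orbit of period $2^{d-1}p$'' is not automatic: the two $f^2$-orbits into which $O$ splits can be interleaved on the line, so there need not be any $f^2$-invariant subinterval separating them. The standard fix is not to look for an invariant subinterval but to work with a pair of intervals $K_1,K_2$ (meeting in at most a point) that cover each other under $f$, obtained from a fixed point of $f$ inside the convex hull of $O$; one then analyses $f^2$ on $K_1$ via the induced Markov graph. With that correction the induction goes through as you describe.
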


\begin{definition}
Let $n\in\IN\cup\{2^\infty\}$.
An interval map $f$ is \emph{of type $n$ (for
Sharkovskii's order)} if the periods of the periodic points of $f$
form exactly the set $\{m\in\IN\mid m\unrhd n\}$, where the notation
$\{m\in\IN\mid m\unrhd 2^\infty\}$ stands for $\{2^k\mid k\ge 0\}$.
\end{definition}

It is well known that an interval map $f$ has positive topological entropy 
if and
only if its type is $2^d p$ for some odd integer $p\ge 3$ and some $d\ge 0$
(see e.g. \cite[Theorem~4.58]{R3}).
The entropy of such a map is bounded from below 
(see theorem~4.57 in \cite{R3}).

\begin{theorem}[Štefan, Block-Guckenheimer-Misiurewicz-Young]\label{theo:lambdap}
Let $f$ be an interval map of type $2^d p$ for some odd integer $p\ge 3$ 
and some $d\ge 0$. Let $\lambda_p$ be the
unique positive root of $X^p-2X^{p-2}-1$. Then $\lambda_p>\sqrt{2}$ and
$h_{top}(f)\ge \frac{\log \lambda_p}{2^d}$.
\end{theorem}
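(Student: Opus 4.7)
The plan is to prove the statement in three stages: first the simple algebraic inequality $\lambda_p>\sqrt 2$, then a reduction from type $2^dp$ to type $p$, and finally the core combinatorial/entropy estimate at $d=0$.

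First, for $\lambda_p>\sqrt 2$: let $P(X)=X^p-2X^{p-2}-1$. Evaluating at $\sqrt 2$ gives
$P(\sqrt 2)=2^{p/2}-2\cdot 2^{(p-2)/2}-1=2^{p/2}-2^{p/2}-1=-1<0.$
Since $P(X)\to+\infty$ as $X\to+\infty$ and since one verifies (using $p\ge 3$ odd and $P'(X)=pX^{p-1}-2(p-2)X^{p-3}=X^{p-3}(pX^2-2(p-2))$, which has a unique positive zero) that $P$ has a unique positive root, this root must lie strictly to the right of $\sqrt 2$. This takes care of the first assertion.

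Second, I reduce to the case $d=0$. If $f$ has type $2^dp$, then $f$ has a periodic point of period $2^dp$; its orbit under $f$ splits into $2^d$ cycles of $f^{2^d}$, each of length $p$. Standard arguments (one chooses an invariant compact interval $J$ containing such a $p$-cycle and on which $f^{2^d}$ admits a periodic orbit of odd period $p\ge 3$) provide a restriction $g:=f^{2^d}|_J$ that is an interval map with a periodic point of period $p$, hence of type $\unlhd p$. Using the well-known identity $h_{top}(f^n)=n\, h_{top}(f)$ and monotonicity of the bound in $p$ (coarser: by the $d=0$ case applied to $g$), one gets $h_{top}(f)=\frac{1}{2^d}h_{top}(g)\ge\frac{\log\lambda_p}{2^d}$, provided the $d=0$ case is established.

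Third, and this is the main work, I treat $d=0$: $f$ has a periodic orbit $O$ of odd period $p\ge 3$. I invoke Štefan's structural theorem that $O$ contains (or forces, via the Sharkovskii order and a minimality argument among periods $\unlhd p$) a Štefan cycle, i.e.\ a labelling $x_1<x_2<\cdots<x_p$ of an orbit whose permutation is the classical Štefan pattern. The $p-1$ basic intervals $J_i=[x_i,x_{i+1}]$ together with the action of $f$ produce a Markov graph (``$f$-covers''-graph) whose transition matrix $A$ has a subgraph isomorphic to the Štefan graph, for which one computes the characteristic polynomial $X^{p-1}(X^p-2X^{p-2}-1)$ (up to sign/an explicit factor). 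The Perron root is therefore $\lambda_p$. Applying the Misiurewicz--Szlenk / Block--Coven type lower bound $h_{top}(f)\ge\log\rho(A)$ where $\rho(A)$ is the spectral radius of the transition matrix of any Markov partition-like cover of $f$, yields $h_{top}(f)\ge\log\lambda_p$.

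The main obstacle is the third step: identifying the Štefan cycle inside an arbitrary period-$p$ orbit and carefully computing the characteristic polynomial of its Markov graph. The identification of the Štefan pattern is essentially a combinatorial/Sharkovskii-type argument (using that $p$ is the smallest period in the sense of $\lhd$ on $O$'s iterates), and the polynomial computation is a routine expansion along the cycle structure once the graph is drawn. Everything else is an application of standard reduction and entropy-of-iterate identities.
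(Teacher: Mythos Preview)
The paper does not prove this theorem; it is stated as a known result with a reference (``see theorem~4.57 in \cite{R3}''), so there is no in-paper proof to compare against. Your sketch follows the classical Štefan/BGMY route and is broadly sound, but two points deserve correction.

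First, in the reduction step you introduce an ``invariant compact interval $J$'' for $f^{2^d}$; this is unnecessary and, as phrased, unjustified (the convex hull of one of the $p$-cycles need not be $f^{2^d}$-invariant). Simply observe that $f^{2^d}$, as a self-map of the whole interval, has a periodic point of odd period $p\ge 3$, hence its Sharkovskii type is some odd $q$ with $3\le q\le p$; then apply the $d=0$ case to that $q$ and use $\lambda_q\ge\lambda_p$ (which follows from $X^2-2=X^{2-p}$ being monotone in $p$ for $X>\sqrt2$).

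Second, your claimed characteristic polynomial $X^{p-1}(X^p-2X^{p-2}-1)$ has degree $2p-1$, which is impossible for the $(p-1)\times(p-1)$ transition matrix of the Štefan graph. The correct relation is $X^p-2X^{p-2}-1=(X+1)\chi_p(X)$ with $\deg\chi_p=p-1$ (this is exactly the factorisation the paper records in its Lemma on $\chi_p$); the Perron root is still $\lambda_p$, so your conclusion survives, but the stated polynomial is wrong. Also, your invocation of Štefan's structural theorem should be stated precisely: it applies because the type is exactly $p$ (no smaller odd period), which forces every period-$p$ orbit to carry the Štefan pattern.
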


This bound is sharp: for every
$p,d$, there exists a interval map of type $2^dp$ and topological entropy
$\frac{\log \lambda_p}{2^d}$. These examples were first introduced by
Štefan, although the entropy of these maps was computed later  
\cite{Ste, BGMY}.

Moreover, it is known that the type of a topological mixing interval map
is $p$ for some odd integer $p\ge 3$ (see e.g. \cite[Proposition~3.36]{R3}). 
The Štefan maps of type $p$ are topologically mixing 
\cite[Example~3.21]{R3}.

\medskip
We want to show that the topological entropy of a piecewise monotone
map can be equal to any real number, the lower bound of
Theorem~\ref{theo:lambdap} being the only restriction. First, 
for every odd integer $p\ge 3$ and every real number $\lambda\ge \lambda_p$,
we are going to build a piecewise monotone map
$f_{p,\lambda}\colon [0,1]\to [0,1]$ such that its type is $p$ for
Sharkovskii's order, its topological entropy is $\log\lambda$,
and the map is topologically mixing.
Then we will show that for every odd integer $p\ge 3$, every integer $d\ge 0$
and every real number $\lambda\ge \lambda_p$, there exists a
piecewise monotone interval map $f$ such that its type is $2^d p$ for
Sharkovskii's order and its topological entropy is $\frac{\log\lambda}{2^d}$.

\subsection{Notations}

We say that an interval is \emph{degenerate} if it is either empty or reduced
to one point, and \emph{nondegenerate} otherwise.  When we consider an interval
map $f\colon I\to I$, every interval is implicitly a subinterval of
$I$.

Let $J$ be a nonempty interval. Then $\partial J:=\{\inf J, \sup J\}$
are the endpoints of $J$ (they may be equal if $J$ is reduced to one
point) and $|J|$ denotes the length of $J$ (i.e. $|J|:=\sup J-\inf J$).
Let $\milieu(J)$ denote the middle point of 
$J$, that is, $\milieu(J):=\frac{\inf J+\sup J}2$.

An interval map $f\colon I\to I$ is  \emph{piecewise monotone} 
if there exists a finite partition of $I$ into intervals such that $f$ is
monotone on each element of this partition.

An interval map $f$ has a \emph{constant slope} $\lambda$
if $f$ is piecewise monotone and if on each of its pieces of monotonicity
$f$ is linear and the absolute value of the slope coefficient is $\lambda$.

\section{Štefan maps}

We recall the definition of the Štefan maps of odd type $p\ge 3$.

\medskip
Let $n\ge 1$ and $p:=2n+1$.
The Štefan map $f_p\colon [0,2n]\to [0,2n]$, 
represented in Figure~\ref{fig:type-2n+1},
is defined as follows: it
is linear on $[0,n-1]$, $[n-1,n]$, $[n,2n-1]$ and $[2n-1,2n]$, and
$$
f_p(0):=2n,\ f_p(n-1):=n+1,\ f_p(n):=n-1,\ f_p(2n-1):=0,\ f_p(2n):=n.
$$
Note that $n=1$ is a particular case because $0=n-1$ and $n=2n-1$.
\begin{figure}[htb]
\centerline{\includegraphics{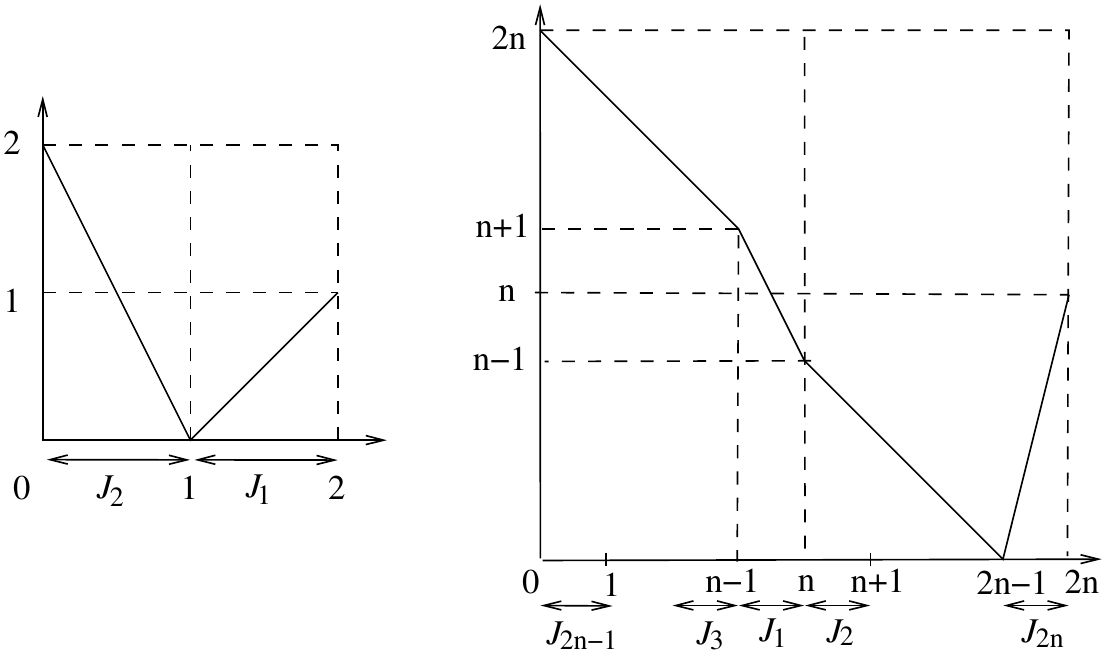}}
\caption{On the left:  the map $f_3$. On the right: 
the map $f_p$ with $p=2n+1>3$.}
\label{fig:type-2n+1}
\end{figure}

Next proposition summarises the properties of $f_p$, see
\cite[Example~3.21]{R3} for the proof.

\begin{proposition}
The map $f_p$ is topologically mixing, its type for Sharkovskii's order
is $p$ and $h_{top}(f)=\log\lambda_p$. Moreover, the point $n$ is
periodic of period $p$, and $f_p^{2k-1}(n)=n-k$ and $f_p^{2k}(n)=n+k$
for all $k\in\Lbrack 1,n\Rbrack$.
\end{proposition}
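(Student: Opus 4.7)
The plan is to address the four assertions in order: the orbit of $n$, the topological entropy, the Sharkovskii type, and mixing. The first is a direct computation, while the remaining three all rely on the Markov structure induced by the orbit of $n$.

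First I would compute the orbit explicitly. On $[0,n-1]$ the map is affine of slope $-1$ with $f_p(0)=2n$, hence $f_p(x)=2n-x$; on $[n,2n-1]$ it is also affine of slope $-1$ with $f_p(n)=n-1$, hence $f_p(x)=2n-1-x$. Evaluating at integers yields $f_p(n-k)=n+k$ for $1\le k\le n$ and $f_p(n+k)=n-k-1$ for $0\le k\le n-1$. An induction combining these gives the stated formulas, and the orbit closes via $f_p(0)=2n$ and $f_p(2n)=n$; so $n$ has exact period $p=2n+1$ and its orbit equals $\{0,1,\ldots,2n\}$.

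Next I would use this orbit to define the Markov partition $I_i:=[i,i+1]$, $0\le i\le 2n-1$, and record the covering relations: $I_i\to I_{2n-1-i}$ for $0\le i\le n-2$; $I_{n-1}\to I_{n-1},I_n$ (through the slope-$(-2)$ piece); $I_i\to I_{2n-2-i}$ for $n\le i\le 2n-2$; and $I_{2n-1}\to I_0,\ldots,I_{n-1}$. For the entropy, $h_{top}(f_p)$ equals the log-spectral-radius of the associated transition matrix $M$; and since most rows of $M$ have a single nonzero entry, expanding $\det(XI-M)$ along those rows should reduce the computation to $X^p-2X^{p-2}-1$, whose unique positive root is $\lambda_p$.

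For the Sharkovskii type, Sharkovskii's theorem together with the period-$p$ orbit of $n$ already supplies every period $m\unrhd p$, so what remains is to exclude odd periods $q$ with $3\le q\le p-2$. I would argue this on the Markov graph: every loop decomposes as a concatenation of the self-loop at $I_{n-1}$ (length $1$) and the unique long cycle $I_{n-1}\to I_n\to I_{n-2}\to I_{n+1}\to\cdots\to I_0\to I_{2n-1}\to I_{n-1}$ of length $p-1$, and a careful check should show that the loops of odd length strictly between $1$ and $p$ produce only iterates of the period-$1$ fixed point rather than new odd periodic orbits. This combinatorial verification is the main obstacle. Topological mixing then follows once I observe that the Markov graph is strongly connected (every $I_j$ reaches $I_{n-1}$ through the long cycle, and $I_{n-1}$ reaches every $I_j$) and aperiodic (thanks to the self-loop at $I_{n-1}$); the transition matrix is therefore primitive, which is equivalent to topological mixing of $f_p$ on $[0,2n]$.
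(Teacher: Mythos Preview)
The paper does not actually prove this proposition; it simply records the properties of the \v{S}tefan map and refers to \cite[Example~3.21]{R3} for the argument. So there is no detailed proof here to compare against, and your outline is essentially the standard route that such a reference would follow: compute the orbit, set up the Markov partition $I_i=[i,i+1]$, read off entropy, type and mixing from the transition graph.

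That said, your sketch contains one genuine error. Your own list of covering relations has $I_{2n-1}\to I_0,\ldots,I_{n-1}$, so $I_{2n-1}$ has out-degree $n$, and for $p\ge 5$ the graph contains many cycles that avoid $I_{n-1}$ entirely, for instance $I_0\to I_{2n-1}\to I_0$ of length $2$, or more generally $I_{2n-1}\to I_j\to I_{2n-1-j}\to\cdots\to I_0\to I_{2n-1}$ of length $2j+2$ for $0\le j\le n-2$. Hence your assertion that ``every loop decomposes as a concatenation of the self-loop at $I_{n-1}$ and the unique long cycle of length $p-1$'' is false, and the type argument you build on it does not stand as written. The correct dichotomy is: cycles that do not pass through $I_{n-1}$ are concatenations of those even loops at $I_{2n-1}$ and hence have even length, while any cycle through $I_{n-1}$ that is not the self-loop must use the edge $I_{n-1}\to I_n$, after which the path is forced all the way to $I_{2n-1}$ and then (possibly after further even-length excursions at $I_{2n-1}$) back to $I_{n-1}$, giving length $\ge 2n=p-1$. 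From this the absence of primitive cycles of odd length $3\le q\le p-2$ follows, and with it the type.

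A minor point: the transition matrix $M$ is $(p-1)\times(p-1)$, so its characteristic polynomial has degree $p-1$ and cannot literally be $X^p-2X^{p-2}-1$. What one obtains is the degree-$(p-1)$ factor $\chi_p(X)$ of $P_p(X)=(X+1)\chi_p(X)$ (cf.\ Lemma~\ref{lem:lambdap}), whose largest root is indeed $\lambda_p$; your entropy conclusion is correct once this is said precisely. The mixing argument via primitivity of the transition matrix is fine.
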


\section{Mixing map of given entropy and odd type}\label{sec:oddtype}

For every odd integer $p\ge 3$ and every real number $\lambda\ge \lambda_p$,
we are going to build a piecewise monotone continuous map
$f_{p,\lambda}\colon [0,1]\to [0,1]$ such that its type is $p$ for
Sharkovskii's order, its topological entropy is $\log\lambda$,
and the map is topologically mixing. We will write $f$ instead
of $f_{p,\lambda}$ when there is no ambiguity on $p,\lambda$.

The idea is the following: we start with the Štefan map $f_p$,
we blow up the minimum into an interval and we define the map of this
interval in such a way that the added dynamics increases the
entropy without changing the type. At the same time, we make the slope
constant and equal to $\lambda$, so that the entropy is $\log \lambda$ 
according to the following theorem
\cite[Corollary 4.3.13]{ALM}, which is due to Misiurewicz-Szlenk
\cite{MS2}, Young \cite{You} and Milnor-Thurston \cite{MT2}.

\begin{theorem}\label{theo:entropie-penteCte}
Let $f$ be a piecewise monotone interval map.
Suppose that $f$ has a constant slope $\lambda\ge 1$. 
Then $h_{top}(f)=\log\lambda$.
\end{theorem}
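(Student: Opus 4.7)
My strategy is to invoke the Misiurewicz--Szlenk lap-number formula,
$$
h_{top}(f) \;=\; \lim_{n\to\infty}\frac{1}{n}\log\ell(f^n),
$$
valid for piecewise monotone interval maps, where $\ell(g)$ denotes the number of maximal intervals of monotonicity of a piecewise monotone map $g$, and then to sandwich $\ell(f^n)^{1/n}$ between $\lambda$ and $\lambda$. Observe first that $\ell(f^n)$ is finite because compositions of piecewise monotone maps remain piecewise monotone, and that on each monotonicity piece of $f^n$ the iterate $f^n$ is linear with absolute slope $\lambda^n$ (a product of $n$ slopes of $f$, each of absolute value $\lambda$).

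For the lower bound, let $J_1,\dots,J_N$ with $N=\ell(f^n)$ be the monotone pieces of $f^n$; they partition $I$ up to finitely many endpoints. Constant slope gives $|J_i|=|f^n(J_i)|/\lambda^n \le |I|/\lambda^n$, so summing yields $|I|=\sum_{i=1}^N|J_i|\le N\,|I|/\lambda^n$, i.e.\ $\ell(f^n)\ge\lambda^n$. Plugging this into the Misiurewicz--Szlenk formula produces $h_{top}(f)\ge\log\lambda$.

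For the matching upper bound I would use the variational principle together with Ruelle's inequality. Away from the finite set of turning points, $f$ is $C^1$ with $|f'|\equiv\lambda$, so Ruelle's inequality in its standard piecewise $C^1$ form for interval maps gives
$$
h_\mu(f)\;\le\;\int\log|f'|\,d\mu\;=\;\log\lambda
$$
for every $f$-invariant Borel probability measure $\mu$; atomic measures supported on orbits passing through turning points have zero Kolmogorov--Sinai entropy and cause no difficulty. The variational principle then yields $h_{top}(f)=\sup_\mu h_\mu(f)\le\log\lambda$, and combining the two bounds closes the argument.

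The one step that genuinely requires care is the piecewise $C^1$ version of Ruelle's inequality: since $f'$ is undefined at the finitely many turning points, one must justify either by treating the finite singular set explicitly or by a smoothing approximation that the inequality survives in this setting. This is classical in interval dynamics but is the only delicate point. A purely combinatorial alternative, namely proving $\ell(f^n)\le C\lambda^n$ directly by recursion on preimages of turning points, looks less clean: the recursive bound only yields the trivial $\ell(f^n)\le\ell(f)^n$ unless one exploits constant slope via an averaging identity such as $\int|f^{-n}(y)|\,dy=\lambda^n|I|$, so the variational route seems preferable.
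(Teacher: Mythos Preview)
The paper does not prove this theorem: it is stated as a known result, attributed to Misiurewicz--Szlenk, Young, and Milnor--Thurston, with a pointer to a textbook corollary. So there is no in-paper proof to compare your argument against.

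Your argument is essentially correct. The lower bound via the lap-number formula is clean and complete: each maximal monotone piece of $f^n$ is linear of slope $\pm\lambda^n$, hence has length at most $|I|/\lambda^n$, and summing gives $\ell(f^n)\ge\lambda^n$. For the upper bound, however, invoking Ruelle's inequality together with the variational principle is much heavier than necessary and, as you yourself flag, requires a piecewise-$C^1$ version that you do not actually prove. A far more elementary route avoids measure theory entirely: a constant-slope-$\lambda$ map is $\lambda$-Lipschitz, so if $|x-y|\le\varepsilon\lambda^{-(n-1)}$ then $\max_{0\le k<n}|f^k(x)-f^k(y)|\le\varepsilon$; hence any $\varepsilon\lambda^{-(n-1)}$-net in $I$, of cardinality at most $|I|\lambda^{n-1}/\varepsilon+1$, is an $(n,\varepsilon)$-spanning set, and $h_{top}(f)\le\log\lambda$ follows directly from Bowen's definition. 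With that replacement your proof becomes fully rigorous modulo the Misiurewicz--Szlenk lap formula, which you already take as input.
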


We will also need the next result
(see the proof of Lemma~4.56 in \cite{R3}).

\begin{lemma}\label{lem:lambdap}
Let $p\ge 3$ be an odd integer and $P_p(X):=X^p-2X^{p-2}-1$.
Then $P_p(X)$ has a unique positive root, denoted by
$\lambda_p$. Moreover,
$P_p(x)<0$ if $x\in [0,\lambda_p[$ and $P_p(x)>0$ if $x> \lambda_p$.

Let $\chi_p(X):=X^{p-1}-X^{p-2}-\sum_{i=0}^{p-2}(-X)^i$. 
Then $P_p(X)=(X+1)\chi_p(X)$, and thus
$\chi_p(x)<0$ if $x\in [0,\lambda_p[$ and $\chi_p(x)>0$ if $x> \lambda_p$.
\end{lemma}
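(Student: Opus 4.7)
The plan is to treat the two assertions independently, first dealing with $P_p$ by calculus and then deducing the statement about $\chi_p$ from the factorisation.

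For the first part, I would analyse $P_p$ via its derivative. A direct computation gives
$P_p'(X)=pX^{p-1}-2(p-2)X^{p-3}=X^{p-3}\bigl(pX^2-2(p-2)\bigr)$,
which on $(0,+\infty)$ has a unique zero at $x_0:=\sqrt{2(p-2)/p}$, is strictly negative on $(0,x_0)$ and strictly positive on $(x_0,+\infty)$. Hence $P_p$ is strictly decreasing on $[0,x_0]$ and strictly increasing on $[x_0,+\infty)$. Since $P_p(0)=-1<0$ and $P_p(x)\to +\infty$, the intermediate value theorem together with strict monotonicity on $[x_0,+\infty)$ gives exactly one positive root $\lambda_p$, which moreover satisfies $\lambda_p>x_0$; the sign assertion $P_p(x)<0$ on $[0,\lambda_p)$ and $P_p(x)>0$ on $(\lambda_p,+\infty)$ is read off the variation table.

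For the second part, I would first observe that because $p$ is odd,
$P_p(-1)=(-1)^p-2(-1)^{p-2}-1=-1+2-1=0$,
so $X+1$ divides $P_p(X)$. To identify the quotient with $\chi_p(X)$, I would multiply out $(X+1)\chi_p(X)$: the term $(X+1)(X^{p-1}-X^{p-2})$ telescopes to $X^p-X^{p-2}$, and for the geometric part one uses the standard identity $(1+X)\sum_{i=0}^{k}(-X)^i=1-(-X)^{k+1}$, which with the appropriate upper index gives a term equal to $1+X^{p-2}$ (the parity of $p$ being what produces the right sign). Collecting everything recovers $X^p-2X^{p-2}-1=P_p(X)$, confirming the factorisation.

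The sign statement for $\chi_p$ then follows immediately: for $x\ge 0$ one has $x+1>0$, so $\chi_p(x)$ and $P_p(x)$ carry the same sign, and the claim transfers from the first part. The only real obstacle is bookkeeping in the algebraic expansion of $(X+1)\chi_p(X)$, where parity of $p$ must be used correctly to get the sign of $(-X)^{p-2}$; everything else is routine calculus and the intermediate value theorem.
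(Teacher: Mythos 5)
Your argument is correct, and there is nothing in the paper to compare it against: the paper does not prove this lemma, it only points to the proof of Lemma~4.56 in the cited reference. Your route --- variation table of $P_p$ from the sign of $P_p'(X)=X^{p-3}\bigl(pX^2-2(p-2)\bigr)$, the intermediate value theorem, then direct expansion of $(X+1)\chi_p(X)$ and transfer of sign using $x+1>0$ for $x\ge 0$ --- is a perfectly sound self-contained proof (for the first part one could also invoke Descartes' rule of signs, since the coefficient sequence of $P_p$ has exactly one sign change, but your calculus argument is just as good). One point should be made explicit rather than hidden behind the phrase ``with the appropriate upper index'': as printed, the lemma defines $\chi_p$ with $\sum_{i=0}^{p-2}(-X)^i$, and with that upper index the factorisation is false --- for $p=3$ it gives $\chi_3(X)=X^2-1$ and $(X+1)(X^2-1)\neq X^3-2X-1$. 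The upper index must be $p-3$; this is the version actually used later in the paper in the computation of $t-\frac{1}{\lambda}$, and it is with $k=p-3$ that your identity $(1+X)\sum_{i=0}^{k}(-X)^i=1-(-X)^{k+1}=1+X^{p-2}$ holds and yields $X^p-X^{p-2}-\bigl(1+X^{p-2}\bigr)=P_p(X)$. So your computation is right, but you should state the corrected definition of $\chi_p$ explicitly instead of leaving the reader to infer which upper index you used.
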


\subsection{Definition of the map}

We fix an odd integer $p\ge 3$ and a real $\lambda\ge \lambda_p$.
Recall that $\lambda_p>\sqrt2>1$ (Theorem~\ref{theo:lambdap}).

\medskip
We are going to define points ordered as follows:
\begin{gather*}
x_{p-2}<x_{p-4}<\cdots <x_1<x_0<x_2<x_4<\cdots<x_{p-3}
\le t<x_{p-1},\\
\text{with}\quad x_{p-2}=0,\ x_{p-3}=\frac1\lambda\quad\text{and}
\quad x_{p-1}=1.
\end{gather*}
The points $x_0,\ldots, x_{p-1}$ will form a periodic orbit of period $p$,
that is, $f(x_i)=x_{i+1\bmod p}$ for all $i\in\Lbrack 0, p-1\Rbrack$.

\begin{remark}
In the following construction, the case $p=3$ is degenerate. The periodic
orbit is reduced to $x_1<x_0<x_2$ with $x_1=0, 
x_0=\frac1\lambda, x_2=1$. We only have to determine the value of $t$.
\end{remark}

The map $f\colon [0,1]\to [0,1]$ is defined as follows
(see Figure~\ref{fig:f}):
\begin{itemize}
\item $f(x):=1-\lambda x$ for all $x\in[0,\frac{1}{\lambda}]=[x_{p-2},x_{p-3}]$
(so that $f(0)=1$ and $f(\frac{1}{\lambda})=0$),
\item $f(x):=\lambda(x-t)$ for all $x\in [t,1]$ (so that $f(t)=0$
and $f(1)=\lambda(1-t)$),
\item definition on $[\frac{1}{\lambda},t]$: 
we want to have $f([\frac{1}{\lambda},t])\subset [0,x_{p-4}]$
(note that $x_{p-4}$ is the least positive point among $x_0,\ldots,x_{p-1}$),
with $f$ of constant slope $\lambda$, in such a way that all the critical points
except at most one are sent by $f$ on either $0$ or $x_{p-4}$.
If $t=\frac{1}{\lambda}$, there is nothing to do. If $t>\frac{1}{\lambda}$,
we set
$\ell:= t-\frac{1}{\lambda}$ (length of the interval), 
$k:=\left\lfloor \frac{\lambda \ell}{2x_{p-4}}\right\rfloor$,
\begin{equation}\label{eq:defJi}
J_i:=\left[\frac{1}{\lambda}+(i-1) \frac{2x_{p-4}}{\lambda},
\frac{1}{\lambda}+ i \frac{2x_{p-4}}{\lambda}\right]
\quad\text{for all }i\in\Lbrack 1, k\Rbrack,
\end{equation}
\begin{equation}\label{eq:defK}
K:=\left[\frac{1}{\lambda}+ k\frac{2x_{p-4}}{\lambda}, t\right].
\end{equation}
If $p=3$, we replace $x_{p-4}$ (not defined) by $1$ in the above definitions.

It is possible that there is no interval $J_1,\ldots, J_k$
(if $k=0$) or that $K$ is reduced to the point $\{t\}$.
On each interval $J_1,\ldots J_k$, $f$ is defined
as the tent map of summit $x_{p-4}$: $f(\min J_i)=0$,
$f$ is increasing of slope $\lambda$ on $[\min J_i,\milieu(J_i)]$
(thus $f(\milieu(J_i))=x_{p-4}$ because of the length of $J_i$),
then $f$ is decreasing of slope $-\lambda$ on $[\milieu(J_i),\max J_i]$
and $f(\max J_i)=0$.
On $K$, $f$ is defined as a tent map with a summit $< x_{p-4}$: $f(\min K)=0$, 
$f$ is increasing of slope $\lambda$ on $[\min K,\milieu(K)]$,
then $f$ is decreasing of slope $-\lambda$ on $[\milieu(K),\max K]$
and $f(\max K)=0$.
\end{itemize}

\begin{figure}[htb]
\centerline{\includegraphics[width=15cm]{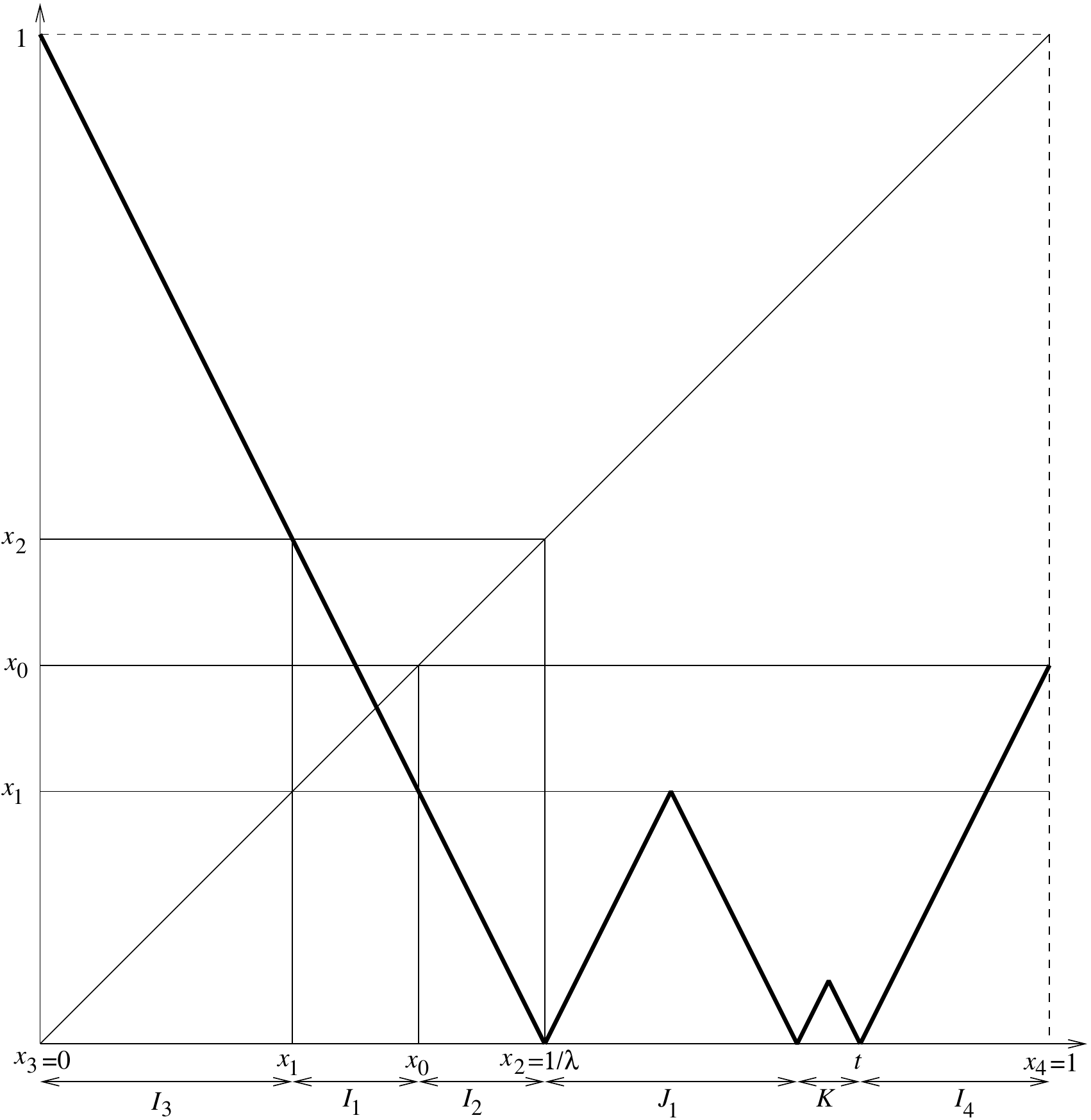}}
\caption{The map $f$ for $p=5$ and $\lambda=2$. \label{fig:f}}
\end{figure}

In this way, we get a map $f$ that is continuous on $[0,1]$, piecewise
monotone, of constant slope $\lambda$. It remains to define $t$ and the
points $\{x_i\}_{0\le i\le p-4}$ (recall that
$x_{p-3}=\frac{1}{\lambda}$, $x_{p-2}=0$ and $x_{p-1}=1$).

We want these points to satisfy:
\begin{equation}
x_0\ =\ \lambda(1-t)\label{eq:t}
\end{equation}
and
\begin{equation}\label{eq:xi}
\left\{\begin{array}{lcl}
x_1&=&1-\lambda x_0\\
x_2&=&1-\lambda x_1\\
&\vdots&\\
x_{p-3}&=&1-\lambda x_{p-4}
\end{array}\right.
\end{equation}
and to be ordered as follows:
\begin{gather}
x_{p-2}<x_{p-4}<\cdots <x_1<x_0<x_2<x_4<\cdots<x_{p-3}
\label{eq:ordre-xi}\\
\frac{1}{\lambda} \le t<x_{p-1}.\label{eq:ordre-t}
\end{gather}
If $p=3$, the system \eqref{eq:xi} is empty, and equation~\eqref{eq:ordre-xi}
is satisfied because it reduces to $0=x_1<x_0=\frac{1}{\lambda}$.

According to the definition of $f$, the equations \eqref{eq:t}, \eqref{eq:xi},
\eqref{eq:ordre-xi},
\eqref{eq:ordre-t} imply that $f(x_i)=x_{i+1}$ for all $i\in\Lbrack 0,
p-2\Rbrack$ and $f(x_{p-1})=x_0$.

We are going to show that the system \eqref{eq:xi} is equivalent to:
\begin{equation}\label{eq:formule-xi}
\forall i\in\Lbrack 0,p-4\Rbrack,\quad x_i=\frac{(-1)^i}{\lambda^{p-i-2}}
\sum_{j=0}^{p-i-3}(-\lambda)^j.
\end{equation}
We use a descending induction on $i$.

$\bullet$ According to the last line of \eqref{eq:xi}, 
$x_{p-4}=\frac{1}{\lambda}(1-x_{p-3})=\frac{1}{\lambda^2}(\lambda-1)$.
This is \eqref{eq:formule-xi} for $i=p-4$.

$\bullet$ Suppose that \eqref{eq:formule-xi} holds for $i$ with $i\in
\Lbrack 1,p-4\Rbrack$. By \eqref{eq:xi}, $x_i=1-\lambda x_{i-1}$, thus
\begin{align*}
x_{i-1}&=-\frac{1}{\lambda}(x_i-1)\\
&=-\frac{(-1)^i}{\lambda^{p-i-1}}\left(\sum_{j=0}^{p-i-3}(-\lambda)^j-
(-1)^i\lambda^{p-i-2}\right)
\end{align*}
Since $p$ is odd, $-(-1)^i\lambda^{p-i-2}=(-\lambda)^{p-i-2}$. Hence
\[
x_{i-1}=\frac{(-1)^{i-1}}{\lambda^{p-i-1}}\sum_{j=0}^{p-i-2}(-\lambda)^j,
\]
which gives \eqref{eq:formule-xi} for $i-1$. This ends the proof of
\eqref{eq:formule-xi}.

Equation~\eqref{eq:t} is equivalent to
$t=1-\frac{1}{\lambda}x_0$.
Thus, using \eqref{eq:formule-xi}, we get
\begin{equation}\label{eq:formule-t}
t=\frac{1}{\lambda^{p-1}}\left(\lambda^{p-1}-\sum_{j=0}^{p-3}(-\lambda)^j
\right).
\end{equation}

Conclusion: with the values of $x_0,\ldots, x_{p-4}$ and $t$ given by
\eqref{eq:formule-xi} and \eqref{eq:formule-t}, the system of equations
\eqref{eq:t}-\eqref{eq:xi} is satisfied (and there is a unique solution).
It remains to show that these points are ordered as stated in
\eqref{eq:ordre-xi} and \eqref{eq:ordre-t}.

Let $i$ be in $\Lbrack 0, p-6\Rbrack$. By \eqref{eq:formule-xi}, we have
\begin{align*}
x_{i+2}-x_i&=\frac{(-1)^i}{\lambda^{p-i-2}}\left(
\lambda^2\sum_{j=0}^{p-i-5}(-\lambda)^j-\sum_{j=0}^{p-i-3}(-\lambda)^j\right)\\
&=\frac{(-1)^i}{\lambda^{p-i-2}}\left(
\sum_{j=2}^{p-i-3}(-\lambda)^j-\sum_{j=0}^{p-i-3}(-\lambda)^j\right)\\
&=\frac{(-1)^i}{\lambda^{p-i-2}}(\lambda-1)
\end{align*}
Since $\lambda-1>0$, we have, for all $i\in \Lbrack 0, p-6\Rbrack$,
\begin{itemize}
\item $x_i<x_{i+2}$ if $i$ is even,
\item $x_{i+2}<x_i$ if $i$ is odd.
\end{itemize}
By \eqref{eq:formule-xi}, $x_{p-4}=\frac{\lambda-1}{\lambda^2}$. Since
$\lambda>1$, $x_{p-4}>0=x_{p-2}$.
Again by \eqref{eq:formule-xi},
\begin{align*}
x_0-x_1&=\frac{1}{\lambda^{p-2}}\left(
\sum_{j=0}^{p-3}(-\lambda)^j+\lambda\sum_{j=0}^{p-4}(-\lambda)^j\right)\\
&=\frac{1}{\lambda^{p-2}}\left(
\sum_{j=0}^{p-3}(-\lambda)^j-\sum_{j=1}^{p-3}(-\lambda)^j\right)\\
&=\frac{1}{\lambda^{p-2}}>0
\end{align*}
thus $x_0<x_1$.
Moreover,
\[
x_{p-3}-x_{p-5}=\frac{1}{\lambda}-\frac{\lambda^2-\lambda+1}{\lambda^3}
=\frac{\lambda-1}{\lambda^3}>0
\]
thus $x_{p-5}<x_{p-3}$.
This several inequalities imply \eqref{eq:ordre-xi}.

By \eqref{eq:formule-t}, we have
\[
t-\frac{1}{\lambda}=\frac{1}{\lambda^{p-1}}\left(
\lambda^{p-1}-\lambda^{p-2}-\sum_{j=0}^{p-3}(-\lambda)^j\right)=
\frac{1}{\lambda^{p-1}}\cdot \chi_p(\lambda),
\]
where $\chi_p$ is defined in Lemma~\ref{lem:lambdap}. 
According to this lemma,
$\chi_p(\lambda)\ge 0$ (with equality iff $\lambda=
\lambda_p$) because $\lambda
\ge \lambda_p$. This implies that $t\ge \frac{1}{\lambda}$ 
(with equality iff $\lambda=\lambda_p$).
Moreover, if $t\ge 1$, then $x_0=\lambda(1-t)\le 0$, 
which is impossible by \eqref{eq:ordre-xi}; thus $t<1$. 
Therefore, the inequalities~\eqref{eq:ordre-t} hold.

Finally, we have shown that the map $f_{p,\lambda}=f$ is defined as wanted.

\subsection{Entropy}

\begin{corollary}
$h_{top}(f_{p,\lambda})=\log\lambda$.
\end{corollary}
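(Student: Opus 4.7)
The proof is essentially an immediate application of Theorem~\ref{theo:entropie-penteCte}; the work has already been done in the construction of $f_{p,\lambda}$. The plan is simply to verify that the hypotheses of that theorem hold.

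First I would note that $f_{p,\lambda}$ is piecewise monotone by construction: on $[x_{p-2},x_{p-3}]=[0,1/\lambda]$ and on $[t,1]$ it is linear (so trivially monotone on each), and on $[1/\lambda,t]$ it is built as a concatenation of tent-shaped pieces on $J_1,\ldots,J_k$ and $K$, each of which contributes finitely many monotone pieces. Hence the full partition of $[0,1]$ into monotone pieces is finite.

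Next I would check that $f_{p,\lambda}$ has constant slope $\lambda$: on $[0,1/\lambda]$ it is $f(x)=1-\lambda x$ (slope $-\lambda$); on $[t,1]$ it is $f(x)=\lambda(x-t)$ (slope $\lambda$); and on each $J_i$ and on $K$ it was defined to be a tent map with increasing branch of slope $\lambda$ and decreasing branch of slope $-\lambda$. Thus on every monotone piece, $f_{p,\lambda}$ is linear with $|f'|=\lambda$.

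Finally, by Theorem~\ref{theo:lambdap}, $\lambda_p>\sqrt{2}$, so the assumption $\lambda\ge\lambda_p$ gives $\lambda>1$. Applying Theorem~\ref{theo:entropie-penteCte} then yields $h_{top}(f_{p,\lambda})=\log\lambda$. There is no real obstacle here — the content is entirely in having produced a well-defined map of constant slope $\lambda$ in the previous subsection.
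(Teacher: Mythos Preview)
Your proof is correct and follows exactly the same approach as the paper: verify that $f_{p,\lambda}$ is piecewise monotone of constant slope $\lambda\ge\lambda_p>1$, then invoke Theorem~\ref{theo:entropie-penteCte}. The paper's own proof is a single sentence to this effect; you have simply spelled out the verification in more detail.
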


\begin{proof}
This result is given by Theorem~\ref{theo:entropie-penteCte} because,
by definition, $f_{p,\lambda}$ is piecewise monotone of constant slope 
$\lambda$  with $\lambda>1$. 
\end{proof}

\subsection{Type}

\begin{lemma}\label{lem:periode-pseudo-recouvrement}
Let $g\colon [0,1]\to [0,1]$ be a continuous map.
Let $\CA$ be a finite family of closed intervals that form a pseudo-partition
of $[0,1]$, that is,
\[
\bigcup_{A\in\CA}A=[0,1]\quad\text{and}\quad\forall A,B\in\CA,\ A\neq B
\Rightarrow \Int{A}\cap \Int{B}=\emptyset.
\]
We set
$
\partial\CA=\bigcup_{A\in\CA}\partial A.
$
Let $\CG$ be the oriented graph whose vertices are the elements of $\CA$
and in which there is an arrow $A\dashrightarrow B$ iff
$g(A)\cap \Int{B}\neq \emptyset$.
Let $x$ be a periodic point of period $q$ for $g$ such that
$\{g^n(x)\mid n\ge 0\}\cap \partial \CA=\emptyset$. Then there
exist $A_0,\ldots, A_{q-1}\in\CA$ such that
$A_0\dashrightarrow A_1\dashrightarrow\cdots\dashrightarrow A_{q-1}
\dashrightarrow A_0$ is a cycle in the graph $\CG$.
\end{lemma}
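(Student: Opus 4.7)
The plan is to track the orbit of $x$ through the pseudo-partition $\CA$ and to read off the cycle directly from the sequence of intervals that the orbit visits.

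First I would verify the following one-point fact: for every $y\in [0,1]\setminus\partial\CA$, there is a \emph{unique} $A\in\CA$ with $y\in\Int{A}$. Indeed, since $\bigcup_{A\in\CA}A=[0,1]$, some $A\in\CA$ contains $y$; since $y\notin\partial A$ (because $\partial A\subset\partial\CA$), one has $y\in\Int{A}$; and uniqueness follows from the pairwise disjointness of the interiors. This step is where the hypothesis $\{g^n(x)\mid n\ge 0\}\cap\partial\CA=\emptyset$ is used, and it is really the only nontrivial observation in the proof.

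Next, I would apply this to each iterate: for $n\in\Lbrack 0,q-1\Rbrack$, let $A_n$ be the unique element of $\CA$ with $g^n(x)\in\Int{A_n}$. Since $x$ has period $q$, the index may be read modulo $q$, so in particular $A_q=A_0$.

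Finally, I would check that consecutive $A_n$'s are connected by an arrow. By construction $g^n(x)\in A_n$, so $g^{n+1}(x)=g(g^n(x))\in g(A_n)$. On the other hand $g^{n+1}(x)\in\Int{A_{n+1\bmod q}}$, and therefore
\[
g(A_n)\cap\Int{A_{n+1\bmod q}}\neq\emptyset,
\]
which is exactly the existence of the arrow $A_n\dashrightarrow A_{n+1\bmod q}$ in $\CG$. Chaining these arrows produces the desired cycle $A_0\dashrightarrow A_1\dashrightarrow\cdots\dashrightarrow A_{q-1}\dashrightarrow A_0$. There is no serious obstacle; the argument is essentially a definition-chase once the uniqueness statement above is in place, and the $A_n$'s need not be distinct, which matches what the lemma asserts.
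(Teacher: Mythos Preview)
Your argument is correct and essentially identical to the paper's: choose $A_n$ as the unique element of $\CA$ whose interior contains $g^n(x)$, check that $g(A_n)\cap\Int{A_{n+1}}\neq\emptyset$, and close the loop using $g^q(x)=x$. If anything, you spell out the existence/uniqueness of $A_n$ in slightly more detail than the paper does.
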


\begin{proof}
For every $n\ge 0$, there exists a unique element $A_n\in\CG$ such that
$g^n(x)\in \Int{A_n}$ because
$\{g^n(x)\mid n\ge 0\}\cap \partial \CA=\emptyset$.
We have $g^n(x)\in A_n$ and $g^{n+1}(x)\in\Int{A_{n+1}}$, thus
$g(A_n)\cap \Int{A_{n+1}}\neq\emptyset$; in other words, there is an
arrow $A_n\dashrightarrow A_{n+1}$ in $\CG$. Finally, $A_q=A_0$ because
$g^q(x)=x$.
\end{proof}

\begin{proposition}
The map $f_{p,\lambda}$ is of type $p$ for Sharkovskii's order.
\end{proposition}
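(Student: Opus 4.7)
The orbit $\{x_0,\dots,x_{p-1}\}$ has period $p$, so Sharkovskii's theorem already forces every period $m\unrhd p$ and hence the type of $f_{p,\lambda}$ is $\unlhd p$. For $p=3$ this is enough; for $p\ge 5$ the remaining task is to exclude every odd period $q$ with $3\le q\le p-2$. The plan is to apply Lemma~\ref{lem:periode-pseudo-recouvrement} to a carefully chosen pseudo-partition $\CA$ and to control the odd-length cycles of the resulting graph $\CG$.

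For $\CA$ I would take the subintervals of $[0,1/\lambda]$ cut out by the orbit---write $A_{p-4}=[0,x_{p-4}]$, $A_j=[x_{j+2},x_j]$ for $j$ odd in $\Lbrack 1,p-6\Rbrack$, $A_0=[x_1,x_0]$, and $B_j=[x_{j-2},x_j]$ for $j$ even in $\Lbrack 2,p-3\Rbrack$---together with the \emph{entire} tent pieces $J_1,\dots,J_k,K$ of $[1/\lambda,t]$ (kept whole, not split at their summits), and $N=[t,1]$. The reason for the coarse tent partition is that then every point of $\partial\CA$ lands on the $p$-orbit in at most two iterates, so the only periodic orbit contained in $\partial\CA$ is $\{x_0,\dots,x_{p-1}\}$ itself. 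Using $f(x_i)=x_{i+1\bmod p}$, $f(J_i)=A_{p-4}$, $f(K)\subset A_{p-4}$, $f(N)=[0,x_0]$, and $f(A_{p-4})=[1/\lambda,1]$, I would then read off all the arrows of $\CG$: a self-loop $A_0\dashrightarrow A_0$ and $A_0\dashrightarrow B_2$; the unique out-arrows $A_j\dashrightarrow B_{j+3}$ and $B_j\dashrightarrow A_{j-1}$ for the corresponding $j$; $A_{p-4}\dashrightarrow J_i,K,N$; $J_i,K\dashrightarrow A_{p-4}$; and $N\dashrightarrow A_j$ for every $j\in\{0,1,3,\dots,p-4\}$.

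The crucial structural fact is that $\CG$ minus its self-loop is bipartite: paint $\{A_0,A_1,A_3,\dots,A_{p-4}\}$ white and all other pieces black, and every non-self-loop arrow alternates colours. Hence every closed walk avoiding the self-loop has even length, and any non-self-loop excursion from $A_0$ back to $A_0$ is forced to traverse the chain $A_0\to B_2\to A_1\to B_4\to\dots\to A_{p-4}$ and return through $N$, for a total length of at least $p-1$. Given a hypothetical periodic point $y$ of odd period $q$ with $3\le q\le p-2$, its orbit avoids $\partial\CA$ by the previous remark, so Lemma~\ref{lem:periode-pseudo-recouvrement} provides a closed walk of length $q$ in $\CG$. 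Oddness of $q$ forces at least one use of the self-loop; any additional non-self-loop excursion would push the total to at least $p$, contradicting $q\le p-2$. So the walk must consist solely of self-loops and the orbit of $y$ lies entirely in $\Int{A_0}$. But on $A_0$, $f$ acts as the affine expansion $u\mapsto 1-\lambda u$ (slope $-\lambda$, $\lambda>1$), whose only periodic point is the fixed point $y^*=1/(1+\lambda)$, giving $q=1$ and contradicting $q\ge 3$.

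The main obstacle I anticipate is the careful bookkeeping of $\CG$---verifying in particular that each interior vertex $A_j$ or $B_j$ (with $j\notin\{0,p-4\}$) has the unique forced out-arrow claimed, and that the only in-arrow to $A_0$ (apart from its self-loop) comes from $N$. A secondary subtlety is the choice of coarse tent partition: splitting at the summits would put $\milieu(K)$ into $\partial\CA$, whose forward orbit need not enter the $p$-orbit and could compromise the ``only periodic orbit in $\partial\CA$ is the $p$-orbit'' property that makes the application of the lemma painless.
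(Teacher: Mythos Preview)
Your proof is correct and follows essentially the same approach as the paper: the pseudo-partition you choose is identical to the paper's $\{I_1,\ldots,I_{p-1},J_1,\ldots,J_k,K\}$ up to relabelling (your $A_0=I_1$, $B_j=I_j$ for even $j$, $A_j=I_{j+2}$ for odd $j$, $A_{p-4}=I_{p-2}$, $N=I_{p-1}$), and the resulting covering graph is the Štefan graph with the extra $2$-cycles through $J_i,K$. Your bipartite colouring and explicit handling of the all-self-loop case (forcing the orbit into $\Int{A_0}$, where the affine expansion has only the fixed point) make transparent two points the paper leaves implicit; the paper simply asserts that non-$I_1$ cycles are even and $I_1$-cycles have length $1$ or $\ge p-1$, and does not spell out why a length-$q$ walk consisting of $q$ self-loops cannot arise from a genuine period-$q$ orbit.
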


\begin{proof}
According to the definition of $f=f_{p,\lambda}$, 
$x_0$ is a periodic point of period $p$. It remains to
show that $f$ has no periodic point of period $q$ with
$q$ odd and $3\le q<p$.

We set $I_1:=\langle x_0,x_1\rangle$, $I_i:=\langle x_{i-2},x_i\rangle$
for all $i\in\Lbrack 2,p-2\Rbrack$ and
$I_{p-1}:=[t,1]$, where $\langle a,b\rangle$
denotes the convex hull of $\{a,b\}$ (i.e. $\langle a,b\rangle=[a,b]$ 
or $[b,a]$). The intervals $J_i, K$ have been defined in \eqref{eq:defJi} and
\eqref{eq:defK}. 
The family $\CA:=\{I_1,\ldots, I_{p-1},J_1,\ldots, J_k, K\}$ is a
pseudo-partition of $[0,1]$. Let $\CG$ be the oriented graph associated
to $\CA$ for the map $g=f$ as defined in 
Lemma~\ref{lem:periode-pseudo-recouvrement}.
If $f(A)\supset B$, the arrow 
$A \dashrightarrow B$ is replaced by $A\to B$  (full covering).
The graph $\CG$ is represented in
Figure~\ref{fig:Grecouvrement}; a dotted arrow
$A \dashrightarrow B$ means that $f(A)\cap \Int{B}\neq \emptyset$ but
$f(A)\not\supset B$ (partial covering).

\begin{figure}[htb]
\centerline{\includegraphics{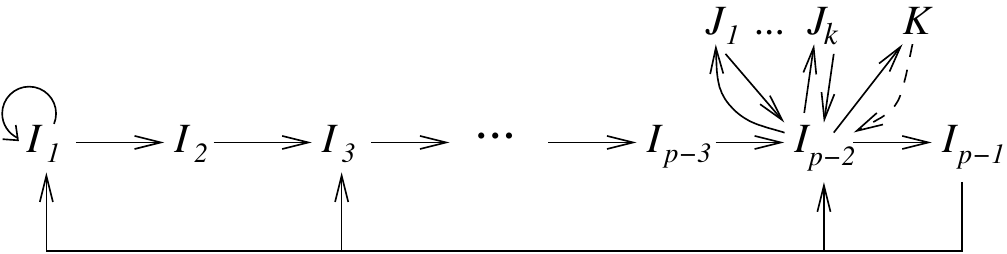}}
\caption{Covering graph $\CG$ associated to $f$. \label{fig:Grecouvrement}
}
\end{figure}

The subgraph associated to the intervals $I_1,\ldots, I_{p-1}$
is the graph associated to a Štefan cycle of period $p$  (see
\cite[Lemma~3.16]{R3}). The only additional arrows with respect to
the Štefan graph are between the intervals $J_1,\ldots, J_k, K$ on the one
hand and $I_{p-2}$ on the other hand. There is only one partial covering,
which is $K\dashrightarrow I_{p-2}$. 

Let $q$ be an odd integer with $3\le q<p$. We easily see that,
in this graph, there is no primitive cycle of length $q$
(a cycle is primitive if it is not the repetition of a shorter cycle):
the cycles not passing through $I_1$ have an even length, whereas the cycles
passing through $I_1$ have a length either equal to $1$, or greater than
or equal to $p-1$. Moreover, if $x$ is a periodic point of period $q$,
then $\{f^n(x)\mid n\ge 0\}\cap \partial \CA=\emptyset$
(because the periodic points in $\partial\CA$ are of period $p$).
According to Lemma~\ref{lem:periode-pseudo-recouvrement}, 
$f$ has no periodic point of period $q$.
Conclusion: $f$ is of type $p$ for Sharkovskii's order.
\end{proof}

\subsection{Mixing}

\begin{proposition}\label{prop:melange}
The map $f_{p,\lambda}$ is topologically mixing.
\end{proposition}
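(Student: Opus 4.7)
The plan is to prove the stronger property of \emph{topological exactness}: for every nondegenerate interval $J\subset [0,1]$ there exists $N$ such that $f^N(J)=[0,1]$. Since $f$ is surjective (e.g.\ $f(0)=1$ and $f(\tfrac{1}{\lambda})=0$ force $f([0,1])=[0,1]$) and every nonempty open subset of $[0,1]$ contains a nondegenerate subinterval, this property implies topological mixing.

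The first step exploits the constant slope $\lambda>1$: as long as $f^n(J)$ sits in a single monotonicity piece we have $|f^{n+1}(J)|=\lambda\,|f^n(J)|$, so lengths grow geometrically until a critical point of $f$ enters the interior. Since $f^n(J)\subset[0,1]$ is bounded, there is a smallest $n_0\ge 0$ for which $f^{n_0}(J)$ meets the (finite) critical set $\{\tfrac{1}{\lambda},t\}\cup\bigcup_{i=1}^{k}\{\min J_i,\milieu(J_i),\max J_i\}\cup\{\min K,\milieu(K)\}$ in its interior. The second step is a bounded case analysis showing that a constant number of further iterations push $f^{n_0+n_1}(J)$ to fully contain some element $I_i$ of the pseudo-partition $\CA$. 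The critical values of $f$ lie in $\{0,\,x_{p-4},\,c_K,\,1,\,x_0\}$, where $c_K<x_{p-4}$ is the summit of $K$, so one step past $n_0$ produces a nondegenerate subinterval with an endpoint in this small set; combining this with further expansion and the fact that the affine branch $f|_{[0,1/\lambda]}$ maps $[0,\tfrac{1}{\lambda}]$ onto $[0,1]$, one obtains an iterate that fully contains some $I_i$.

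Once $f^m(J)\supset I_i$ for some $i$, the conclusion follows from the covering graph $\CG$. Restricted to $\{I_1,\ldots,I_{p-1}\}$, the Štefan subgraph is strongly connected through a cycle of length $p-1$ visiting every $I_i$, and is primitive because the self-loop $I_1\to I_1$ has length $1$, so $\gcd(1,p-1)=1$. Moreover, $f(I_{p-2})=[\tfrac{1}{\lambda},1]$ fully covers $I_{p-1}$, every $J_i$, and $K$. Consequently there is some $N$ such that $f^{m+N}(J)$ fully covers every element of $\CA$, whence $f^{m+N}(J)=[0,1]$.

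The main obstacle is the middle step. Because of the single partial covering from $K$ to $I_{p-2}$ in $\CG$, the pseudo-partition $\CA$ is not Markov and one cannot directly quote a standard primitive-Markov argument. The delicate case is when $J\subset K$ with the summit $c_K$ small: then $f(J)\subset[0,c_K]$ is also small, and one must combine the expansion principle with the single monotonic branch on $[0,\tfrac{1}{\lambda}]$ to argue that the orbit of $J$ escapes the short-tent regime and enters the main Štefan cycle in a uniformly bounded number of steps.
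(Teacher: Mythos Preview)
Your overall strategy---prove topological exactness by (i) expanding until a critical point is captured, (ii) pushing the resulting interval onto some full $I_j$, and (iii) invoking primitivity of the covering graph---is exactly the route the paper takes. Steps (i) and (iii) are fine as you state them. The gap is that you do not actually carry out step (ii): you call it ``a bounded case analysis'', list some critical values, and then in your last paragraph \emph{name} the obstacle (the $K$-case) without resolving it. Two concrete problems:

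First, your claim that the orbit ``enters the main \v{S}tefan cycle in a uniformly bounded number of steps'' is false. If $c_K$ is very small and $J\subset K$ is tiny, the number of iterations needed can be arbitrarily large; there is no bound depending only on $p$ and $\lambda$. What is true---and what the paper uses---is that even in the bad $K$-case there is \emph{definite expansion over two steps}: if $A\subset K$ then $f(A)\subset I_{p-2}=[0,x_{p-4}]$, on which $f$ is monotone, so
\[
|f^2(A)|=\lambda\,|f(A)|\ \ge\ \lambda\cdot\frac{\lambda|A|}{2}\ =\ \frac{\lambda^2}{2}\,|A|\ >\ |A|,
\]
the last inequality because $\lambda\ge\lambda_p>\sqrt{2}$ (Theorem~\ref{theo:lambdap}). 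This is the missing idea: it lets you upgrade step (i) to conclude that some $f^{i_0}(J)$ contains a critical point \emph{other than} $c_K$, i.e.\ a point of $\CC_0\cup\CC_1$, whose image (or third image) is $0$.

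Second, once $0\in f^{n_0}(J)$, you still need an argument to get a full $I_j$. The paper does not do this by case analysis on critical values; instead it tracks the interval $[0,b]=f^{n_0}(J)$ forward through the \v{S}tefan cycle $I_{p-2}\to I_{p-1}\to I_0\to I_1\to\cdots$, using that $f$ is affine of slope $\pm\lambda$ on each $I_i$: the interval stays anchored at the periodic point $x_{i\bmod p}$ and stretches by $\lambda$ at each step, so after finitely many steps it overflows some $I_j$. Your sketch (``combining expansion with the monotone branch on $[0,1/\lambda]$'') gestures at this but does not supply it.
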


\begin{proof}
This proof is inspired by \cite[Lemmas 2.10, 2.11]{R3} and their use in 
\cite[Example 2.13]{R3}.

We will use several times that the image by $f=f_{p,\lambda}$ 
of a nondegenerate interval
is a nondegenerate interval (and thus all its iterates are nondegenerate).

Let $A$ be a nondegenerate closed interval included in $[0,1]$.
We are going to show that there exists an integer $n\ge 0$ 
such that $f^n(A)=[0,1]$.

We set
\[
\CC_0:=\bigcup_{i=1}^k\partial J_i \cup \{t\},\quad
\CC_1:=\{\milieu(J_i)\mid i\in\Lbrack 1,k\Rbrack\},\quad
c_K:=\milieu(K).
\]
The set of critical points of $f$ is
$\CC_0\cup \CC_1\cup \{c_K\}$.

\medskip
\textbf{Step 1:} there exists $i_0\ge 0$ such that 
$f^{i_0}(A)\cap (\CC_0\cup\CC_1)\ne\emptyset$ and 
there exists $n_0\ge 0$ such that $0\in f^{n_0}(A)$.

Let 
\begin{gather*}
J_i':=[\min J_i,\milieu(J_i)]\text{ and } J_i'':=[\milieu(J_i),\max J_i]
\quad\text{for all }i\in\Lbrack 1,k\Rbrack,\\
\CF:=\left\{\left[0,\frac{1}{\lambda}\right], [t,1], K\right\}\cup\{J_i', J_i''\mid 
i\in\Lbrack 1,k\Rbrack\}.
\end{gather*}

If $A\subset B$ for some  $B\in \CF$ and $B\ne K$, then $|f(A)|=\lambda |A|$.
If $A\subset K$, then $|f(A)|\ge \frac{\lambda |A|}2$
and $f(A)\subset I_{p-2}$, thus $|f^2(A)|=\lambda |f(A)|\ge 
\frac{\lambda^2}2 |A|$.
We have $\lambda>1$ and $\frac{\lambda^2}2>1$ because $\lambda>\sqrt{2}$ 
(Theorem~\ref{theo:lambdap}). If for all $i\ge 0$,
there exists $A_i\in\CF$ such that $f^i(A)\subset A_i$, then what precedes
implies that
$\lim_{i\to+\infty}|f^i(A)|=+\infty$. This is impossible because
$f^i(A)\subset [0,1]$. Thus there exist $i_0\ge 0$ and $c\in \CC_0\cup \CC_1$
such that $c\in f^{i_0}(A)$.
If $c\in \CC_0$, then $f(c)=0$, and hence $0\in f^{i_0+1}(A)$. If
$c\in \CC_1$, then $f(c)=x_{p-4}$ and hence $0\in f^{i_0+3}(A)$.
This ends step 1.

\medskip
\textbf{Step 2:} there exist $n_1\ge n_0$ and $j\in\Lbrack 1, p-1\Rbrack$
such that $f^{n_1}(A)\supset I_j$.

Recall that $I_1=[x_1,x_0]$, $I_i=\langle x_{i-2},x_i\rangle$
for all $2\le i\le p-2$ and $I_{p-1}=[t,1]=[t,x_{p-1}]$. We set $I_0:=I_1$. 
By definition, for all $0\le i\le p-1$, there exists $\delta_i>0$ such that
$I_i=\langle x_i, x_i+(-1)^{i+1}\delta_i\rangle$. Moreover, $f$ is linear 
of slope $-\lambda$ on each of the intervals $I_0,\ldots, I_{p-2}$
and of slope $+\lambda$ on $I_{p-1}$.

We set $B_{-2}:=f^{n_0}(A)$. This is a nondegenerate closed interval
containing $0$, thus there exists $b>0$ such that $B_{-2}=[0,b]$
with $0=x_{p-2}$. We set $B_i:=f^{i+2}(B_{-2})$ for all $i\ge -2$, 
and we define $m\ge -2$ as the least integer such that $B_m$ is not included
in a interval of the form $I_j$ (such an integer $m$ exists by step 1).

If $b>x_{p-4}$, then $B_{-2}\supset I_{p-2}$ and $m=-2$. Otherwise,
$B_{-2}\subset I_{p-2}$ and $B_{-1}=[1-\lambda b, 1]
=[x_{p-1}-\lambda b, x_{p-1}]$ because $f|_{I_{p-2}}$
is of slope $-\lambda$. If $1-\lambda b<t$, then $B_{-1}\supset I_{p-1}$
and $m=-1$. Otherwise, $B_{-1}\subset I_{p-1}$ and $B_0=[x_0-\lambda^2 b, x_0]$
because $f|_{I_{p-1}}$ is of slope $+\lambda$. We go on in a similar way.
\begin{itemize}
\item If $m>0$, then $B_0\subset I_0$ and $B_1=[x_1,x_1+\lambda^3 b]$.\\
\item If $m>1$, then $B_1\subset I_1$ and $B_2=[x_2-\lambda^4 b,x_2]$.
\item[$\vdots$]
\item If $m>p-3$, then $B_{p-3}\subset I_{p-3}$ and
$B_{p-2}=\left\langle x_{p-2}, x_{p-2}+(-1)^{p+1}\lambda^p b\right\rangle
=[0,\lambda^p b]$.
\end{itemize}
Notice that $B_{p-2}$ is of the same form as $B_{-2}$. 
What precedes implies that
\begin{gather*}
\forall i\in\Lbrack -2, m\Rbrack,\ B_i=
\left\langle x_{i\bmod p}, x_{i\bmod p}+(-1)^{r+1}\lambda^{i+2}b\right\rangle,
\text{ where }i=qp+r,\ r\in\Lbrack 0, p-1\Rbrack,\\
\forall i\in\Lbrack -2, m-1\Rbrack,\ B_i\subset I_{i\bmod p},\\
B_m\supset I_{m\bmod p}.
\end{gather*}
This ends step 2 with $n_1:=n_0+m+2$ and $j:=m$.

\medskip
\textbf{Step 3:} there exists $n_2\ge n_1$ such that $f^{n_2}(A)=[0,1]$.

Let $n_1\ge 0$ and let $j\in\Lbrack 1,p-1\Rbrack$ be such that
$f^{n_1}(A)\supset I_j$ (step 2).
In the covering graph of Figure~\ref{fig:Grecouvrement}, we see that there
exists an integer $q\ge 0$ such that, for every vertex
$C$ of the graph, there exists a path of length $q$, with only
arrows of type $\to$, starting from $I_j$ and ending at $C$.
This implies that $f^q(I_j)=[0,1]$, that is, $f^{n_1+q}(A)=[0,1]$.

\medskip
We have shown that, for every nondegenerate closed interval 
$A\subset [0,1]$, there exists $n$ such that $f^n(A)=[0,1]$. 
We conclude that $f$ is topologically mixing.
\end{proof}

\section{General case}

\subsection{Square root of a map}

We first recall the definition of the so-called \emph{square root} of
an interval map.
If $f\colon [0,b]\to [0,b]$ is an interval map, the square root
of $f$ is the continuous map
$g\colon [0,3b]\to [0,3b]$ defined by
\begin{itemize}
\item $\forall x\in [0,b]$, $g(x):=f(x)+2b$,
\item $\forall x\in [2b,3b]$, $g(x):=x-2b$,
\item $g$ is linear on $[b,2b]$.
\end{itemize}
The graphs of $g$ and $g^2$ are represented in Figure~\ref{fig:type-2n-g-g2}.

\begin{figure}[hbt]
\centerline{\includegraphics{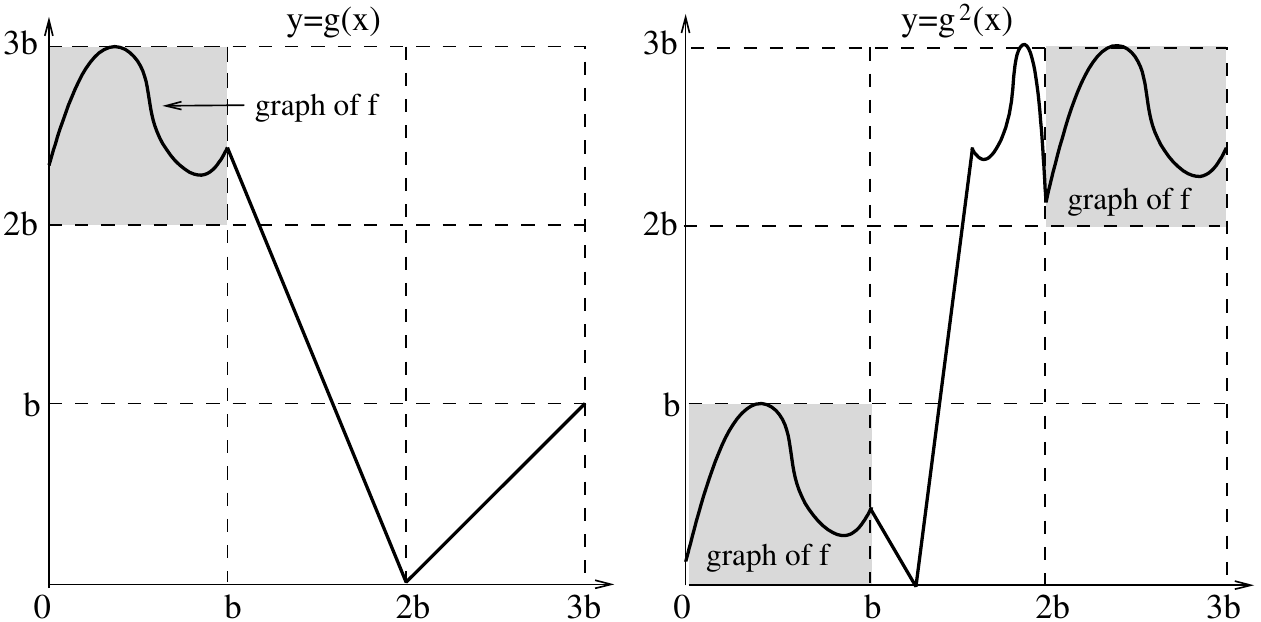}}
\caption{The left side represents the map $g$, which is the square root of $f$.
The right side represents the map $g^2$.} 
\label{fig:type-2n-g-g2}
\end{figure}

The square root map has the following properties, see e.g.
\cite[Examples 3.22 and 4.62]{R3}.

\begin{proposition}\label{prop:squareroot}
Let $f$ be an interval map of type $n$, and let $g$ be the square root of
$f$. Then $g$ is of type $2n$ and $h_{top}(g)=\frac{h_{top}(f)}2$.
If $f$ is piecewise monotone, then $g$ is piecewise monotone too.
\end{proposition}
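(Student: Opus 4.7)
The approach is to exploit the block structure of $g$: on $[0,b]$ we have $g(x)=f(x)+2b\in[2b,3b]$, on $[2b,3b]$ we have $g(x)=x-2b\in[0,b]$, while on the middle piece $[b,2b]$ the map $g$ is a single monotone linear connector of slope $-(f(b)+2b)/b$, which has absolute value at least $2$. Consequently both $[0,b]$ and $[2b,3b]$ are $g^2$-invariant, with $g^2|_{[0,b]}=f$ and $g^2|_{[2b,3b]}$ conjugate to $f$ via $x\mapsto x-2b$. Piecewise monotonicity of $g$ when $f$ is piecewise monotone is immediate: $g$ inherits $f$'s monotonicity partition on $[0,b]$ and adds one monotone piece on each of $[b,2b]$ and $[2b,3b]$.

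For the entropy I would use the elementary identity $h_{top}(g^2)=2h_{top}(g)$ and reduce to proving $h_{top}(g^2)=h_{top}(f)$. The lower bound is immediate from $g^2|_{[0,b]}=f$. For the upper bound, the key quantitative input is that $|g'|\geq 2$ on $[b,2b]$; consequently the set of points $x\in[b,2b]$ whose whole forward orbit stays in $[b,2b]$ is, by iterated contraction of $g^{-1}$, a single point $p$, which must be the unique fixed point of $g$ in $(b,2b)$ (it exists by the intermediate value theorem, since $g(b)\geq 2b$ and $g(2b)=0$). Every other orbit starting in $[b,2b]$ eventually enters $[0,b]\cup[2b,3b]$, from which it cannot escape because $g$ swaps these two components. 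The non-wandering set of $g^2$ therefore lies in $[0,b]\cup\{p\}\cup[2b,3b]$, and by the variational principle $h_{top}(g^2)=\max(h_{top}(f),0,h_{top}(f))=h_{top}(f)$.

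The type is obtained by classifying periodic orbits. By the trapping argument above, each periodic orbit of $g$ lies either entirely in $[b,2b]$ (in which case it is forced to be $\{p\}$, giving period $1$) or entirely in $[0,b]\cup[2b,3b]$. In the latter case the orbit must alternate between the two subintervals, so its $g$-period is even, say $2m$; since $g^{2j}|_{[0,b]}=f^j$, the representative point in $[0,b]$ is $f$-periodic of period exactly $m$, and conversely every $f$-periodic orbit of period $m$ lifts to a $g$-periodic orbit of period $2m$. Thus the set of $g$-periods equals $\{1\}\cup 2\{m\in\IN:m\unrhd n\}$. Since multiplication by $2$ is an order-preserving bijection on $(\IN,\unrhd)$ (sending the layer $2^k\cdot\text{odd}$ to $2^{k+1}\cdot\text{odd}$ and powers of $2$ to powers of $2$), and since $2n$ is never odd, a short check gives $\{1\}\cup 2\{m:m\unrhd n\}=\{m\in\IN:m\unrhd 2n\}$, so $g$ has type $2n$.

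The main obstacle is the entropy upper bound, i.e.\ arguing cleanly that the linear connector $[b,2b]$ contributes no extra complexity to $g^2$. The $|g'|\geq 2$ contraction estimate, which collapses the ``trapped'' set in $[b,2b]$ to the single fixed point $p$, is the crisp quantitative fact that makes both the entropy calculation and the type characterization go through.
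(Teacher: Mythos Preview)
Your argument is correct. Note that the paper does not actually prove this proposition in the text: it simply records the statement and cites \cite[Examples~3.22 and~4.62]{R3} for the proof, so there is no in-paper argument to compare against. What you have written is essentially the standard proof one finds in that reference: the block structure gives $g^2|_{[0,b]}=f$ and $g^2|_{[2b,3b]}$ conjugate to $f$, the slope estimate $|g'|\ge 2$ on the linear connector $[b,2b]$ forces the forward-invariant set inside $(b,2b)$ to collapse to the unique fixed point $p$, and from this both the period classification and the entropy identity follow.

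Two minor remarks. First, for the entropy upper bound you appeal to the variational principle; the slightly more direct statement $h_{top}(g^2)=h_{top}\bigl(g^2|_{\Omega(g^2)}\bigr)$ already suffices once you have $\Omega(g^2)\subset[0,b]\cup\{p\}\cup[2b,3b]$, but either route is fine. Second, your verification that $\{1\}\cup 2\{m:m\unrhd n\}=\{m:m\unrhd 2n\}$ should, strictly speaking, also cover the case $n=2^\infty$ (where $2n$ is again interpreted as $2^\infty$); the check is immediate since doubling $\{2^k:k\ge 0\}$ and adjoining $1$ returns $\{2^k:k\ge 0\}$.
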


\subsection{Piecewise monotone map of given entropy and type}

\begin{theorem}\label{theo:main}
Let $p\ge 3$ be an odd integer, let $d$ be a non negative integer and
$\lambda$ a real number such that $\lambda\ge \lambda_p$.
Then there exists a piecewise monotone map $f$ whose type is
$2^dp$ for Sharkovskii's order and such that 
$h_{top}(f)=\frac{\log\lambda}{2^d}$. If $d=0$, the map $f$ can be built
in such a way that it is topologically mixing.
\end{theorem}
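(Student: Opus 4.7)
The plan is to split into two cases according to whether $d=0$ or $d\ge 1$. For $d=0$, the map $f_{p,\lambda}$ constructed and studied throughout Section~\ref{sec:oddtype} already satisfies every requirement: it is piecewise monotone by construction, it is of type $p$ for Sharkovskii's order by the proposition on type, its topological entropy is $\log\lambda = \frac{\log\lambda}{2^0}$ by the entropy corollary, and it is topologically mixing by Proposition~\ref{prop:melange}. So the $d=0$ case is handled simply by invoking the work already done.

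For $d\ge 1$, the idea is to iterate the square-root construction $d$ times starting from $f_{p,\lambda}$. More precisely, set $g_0 := f_{p,\lambda}$ and define inductively $g_k$ to be the square root of $g_{k-1}$, for $k\in\Lbrack 1,d\Rbrack$. Then take $f := g_d$. By Proposition~\ref{prop:squareroot}, each square-root step doubles the Sharkovskii type and halves the topological entropy, and it preserves piecewise monotonicity. A straightforward induction on $k$ therefore yields: $g_k$ is piecewise monotone, of type $2^k p$, and with $h_{top}(g_k)=\frac{\log\lambda}{2^k}$. Taking $k=d$ gives the desired map $f$.

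The proof is essentially an assembly of previously established results: Section~\ref{sec:oddtype} provides the base case, and Proposition~\ref{prop:squareroot} propagates all three properties (piecewise monotonicity, type, entropy) through the square-root operation. I do not expect any genuine obstacle; the only minor bookkeeping point is that the square-root construction changes the domain from $[0,b]$ to $[0,3b]$, so the resulting map $f$ lives on $[0,3^d]$ rather than $[0,1]$, but this is irrelevant since topological entropy and Sharkovskii type are invariant under affine rescaling of the domain (and the problem statement only requires \emph{some} compact interval, since a piecewise monotone map on any compact interval can be conjugated to one on $[0,1]$).
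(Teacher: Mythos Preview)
Your proof is correct and follows essentially the same approach as the paper: take $f_{p,\lambda}$ for $d=0$, and for $d\ge 1$ iterate the square-root construction $d$ times, invoking Proposition~\ref{prop:squareroot} at each step. The remark about the domain being $[0,3^d]$ is a harmless extra observation not present in the paper.
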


\begin{proof}
If $d=0$, we take $f=f_{p,\lambda}$ defined in Section~\ref{sec:oddtype}.

If $d>0$, we start with the map $f_{p,\lambda}$, then we build the square root
of $f_{p,\lambda}$, then the square root of the square root, etc. 
According to Proposition~\ref{prop:squareroot}, after $d$ steps
we get a piecewise monotone interval map $f$ of type
$2^d p$ and such that $h_{top}(f)=\frac{h_{top}(f_{p,\lambda})}{2^d}=
\frac{\log\lambda}{2^d}$.
\end{proof}

\begin{corollary}
For every positive real number $h$, there exists a piecewise monotone
interval map $f$ such that $h_{top}(f)=h$.
\end{corollary}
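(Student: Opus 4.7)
The plan is to deduce the corollary directly from Theorem~\ref{theo:main}. That theorem realises, for every odd integer $p\ge 3$, every $d\ge 0$ and every $\lambda\ge\lambda_p$, the value $\frac{\log\lambda}{2^d}$ as the topological entropy of some piecewise monotone interval map. So given $h>0$, I only need to exhibit admissible parameters $p,d,\lambda$ for which $\frac{\log\lambda}{2^d}=h$.

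I would fix $p=3$ once and for all, so that the lower bound $\lambda_p=\lambda_3$ is a fixed real number with $\lambda_3>\sqrt{2}>1$ by Theorem~\ref{theo:lambdap}. Given $h>0$, I would then choose an integer $d\ge 0$ large enough that $2^d h\ge\log\lambda_3$; such a $d$ exists because $2^d\to+\infty$, no matter how small $h$ is. Setting $\lambda:=e^{2^d h}$, one has $\log\lambda=2^d h\ge\log\lambda_3$, hence $\lambda\ge\lambda_3=\lambda_p$, and $\frac{\log\lambda}{2^d}=h$ by construction.

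Applying Theorem~\ref{theo:main} with the parameters $(p,d,\lambda)$ just chosen produces a piecewise monotone interval map $f$ (of type $2^d\cdot 3$, although the type plays no role here) with $h_{top}(f)=\frac{\log\lambda}{2^d}=h$, as required. There is no genuine obstacle: the content of the corollary is merely the elementary observation that the set $\bigl\{\frac{\log\lambda}{2^d}\mid d\ge 0,\ \lambda\ge\lambda_3\bigr\}$ exhausts $(0,+\infty)$, and all the actual work has already been done in Theorem~\ref{theo:main}.
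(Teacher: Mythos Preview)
Your proof is correct and follows essentially the same approach as the paper: fix $p=3$, pick $d\ge 0$ with $2^d h\ge \log\lambda_3$, set $\lambda=\exp(2^d h)$, and invoke Theorem~\ref{theo:main}. The paper's argument is identical in substance and nearly identical in presentation.
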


\begin{proof}
Let $d\ge 0$ be an integer such that $\frac{\log \lambda_3}{2^d}\le h$
and set $\lambda:=\exp(2^dh)$. Then $\lambda\ge \lambda_3$ and,
according to Theorem~\ref{theo:main}, there exists a piecewise monotone
interval map $f$ of type $2^d3$ such that 
$h_{top}(f)=\frac{\log \lambda}{2^d}=h$.
\end{proof}

\bibliographystyle{plain}
\bibliography{../../tex/biblio/biblio} 
\end{document}